\newtheorem{theorem}{Theorem}
\newtheorem{proposition}[theorem]{Proposition}
\newtheorem{lemma}[theorem]{Lemma}
\begin{document}

\title{Realizations of Affine Lie Algebra ${A^{(1)}_1}$ at Negative Levels}
\author{Jilan Dong}
\address{School of Sciences, South China University of Technology, Guangzhou, Guangdong 510640, China} \email{dongjilan.11@163.com}
\author{Naihuan Jing$^*$}
\address{Department of Mathematics, North Carolina State University, Raleigh, NC 27695, USA}
\email{jing@math.ncsu.edu}

\thanks{{\scriptsize
\hskip -0.4 true cm MSC (2010): Primary: 17B30; Secondary: 17B68.
\newline Keywords: Affine Lie algebras, negative levels, realizations\\
$*$Corresponding author.
}}

\maketitle

\begin{abstract} A realization of the affine Lie
algebra ${A^{(1)}_1}$ and the relevant $Z$-algebra
at negative level $-k$ is given in terms of parafermions. This generalizes the
recent work on realization of the affine Lie algebra at the critical level.
\end{abstract}

\maketitle


\section{Introduction}
\label{sec:1} Since the sixties the theory of
affine Lie algebras has been one of the popular subjects in
mathematical physics. Its physical applications and mathematical
properties usually depend on whether one can use the matrix method
to give a concrete realization or representation. This approach has
been used in dual resonance models, infinitesimal B\"{a}cklund
transformations in soliton theory etc. The first concrete
realization of the affine Lie algebra $\widehat{sl}(2)$ was
Lepowsky-Wilson's vertex operator representation at level one
\cite{s1} and was then generalized to arbitrary types by
Kac-Kazhdan-Lepowsky-Wilson \cite{KKLW}. Later the homogeneous
realization of simply laced affine Lie algebras at level one was
given by I. Frenkel-Kac \cite{FK} and Segal \cite{S}. Fermionic
realizations were also constructed by I. Frenkel \cite{F} and
Kac-Peterson \cite{KP}, and were generalized to arbitrary types by
Feingold and I. Frenkel \cite{FF}.

Representations of the affine Lie algebras at other levels also have
attracted a lot of attention \cite{s4,s5,s6,s7,s8}. Wakimoto
\cite{s2} derived a general scheme to realize the affine Lie algebra
of type $A_1^{(1)}$ and this was generalized to higher rank by
Feigin and E. Frenkel \cite{FeF}. Generally speaking, highest weight
representations of the affine Lie algebras with integral levels are
built from the theory of vertex operators in terms of bosonic or
fermionic operators. Recently Adamovic \cite{s3} used vertex superalgebras
to study critical modules for $\widehat{sl}_2(\mathbb{C})$. 
Dunbar et al \cite{s6} also gave
a new representation of $\widehat{sl}_2(\mathbb{C})$ at the
critical level using some technique similar to semi-infinite wedge
products. This paper is a generalization of their work to arbitrary
negative integral level using the theory of parafermions.
Parafermions are introduced in statistical mechanics and conformal
field theory, they are also related to Majorana fermions, fractional
superstring, mirror symmetry, and have close connections with
exclusion statistics, quantum computations, Bose-Einstein
condensates etc \cite{s9,s10,s11}. In particular, the parafermions,
sometimes regarded as $Z$-algebras proposed in \cite{s12} contribute
to various extensions of the Ising model and 3-state Potts model,
all of which are basically relevant to ${A^{(1)}_1}$. These works
show that the $Z$-algebra at a positive integral level is identical
with that of ${A^{(1)}_1}$-parafermions.

In the monograph \cite{s7}, Dong and Lepowsky constructed canonical generalized vertex operator algebras for $\widehat{\mathfrak{g}}$ (of simply laced types $\hat{A}$, $\hat{D}$ or $\hat{E}$) and pointed out that the corresponding quotient space for the vacuum space of any positive integer level k standard $\hat{\mathfrak{G}}$-module is a module of the generalized vertex operator algebra. Furthermore, as an illustration, they showed in details the construction for ${A^{(1)}_1}$.  They used the vacuum space of $L(k,0)$ ($k\in\mathbb N$) in terms of a natural Heisenberg subalgebra of ${A^{(1)}_1}$ to define a quotient spaces of this vacuum space by the action of an infinite cyclic group, and then realized the parafermion algebra as the canonically modified $Z$-algebra acting on certain quotient spaces.

In the recent work of \cite{s6} the affine Lie algebra
$\widehat{sl}_2(\mathbb{C})$ at the critical level $-2$ was realized
using the generalized Clifford algebra. This shows that the case of
negative level can be treated by parafermions as well. In this
paper we generalize this result and realize the affine Lie algebra $\widehat{sl}_2$
 at negative levels by parafermions. Although many results at negative
levels are quite similar to the positive integral levels, we still
give a complete treatment of the realization with the hope that this
may be useful to understand Lusztig's theory of the relationship
between quantum groups and affine Lie algebras. For completeness we include all necessary computation of
operator product expansions of parafermions and also provide the detailed
verifications of the $Z$-algebra relations.

 The paper is organized as follows. In section two we first recall the basic definitions. The later part of section two reviews some basic results of parafermions based on \cite{s6}, and briefly explains the physicists' approach to parafermion fields with respect to each form of current algebras, operator product expansions and so on, and also studies in detail the generalized commutation relations and in particular modifications needed in the paper. In section three, a parafermionic representation of ${A^{(1)}_1}$ at level $-k$ ($k\in\mathbb N$) is constructed and corresponding results for the associated $Z$-algebra are given.

\section{Basic definitions}
\label{sec:2}
\subsection{The affine Lie algebra $\widehat{sl}_2$}
Let $\widehat{sl}_2(\mathbb{C})$ be the affine Lie algebra of type ${A^{(1)}_1}$, which is generated by a 1-dimensional central $\emph{c}$, a degree derivation $d=1\otimes t\partial t$ and elements $a(m)=a\otimes t^m\in sl_2(\mathbb{C})\otimes \mathbb{C} [t,t^{-1}]$, where $\mathbb{C} [t,t^{-1}]$ is the algebra of Laurent polynomials in the indeterminate $t$. The Lie bracket operation is defined by

\begin{equation}\label{eq:2.1}
\begin{array}{cc} &[c,\widehat{sl}_2(\mathbb{C})],\qquad [d,a(m)]=ma(m)\\&[a(m),b(n)]=[a,b](m+n)+Tr(ab)mc\delta_{m+n,0}
\end{array}
\end{equation}
for all $m,n\in\mathbb Z, a,b\in sl_2(\mathbb{C})$. The Chevalley basis of $sl_2$ consists of $X,Y,H$:
$$H=\left(\begin{array}{cc}
     1 & 0\\
     0 & -1
   \end{array}\right),
   X=\left(\begin{array}{cc}
     0 & 1\\
     0 & 0
   \end{array}\right),
   Y=\left(\begin{array}{cc}
     0 & 0\\
     1 & 0
   \end{array}\right)
$$
with brackets
\begin{equation}
[H,X]=2X,[H,Y]=-2Y,[X,Y]=H
\end{equation}

 Besides the presentation of ${A^{(1)}_1}$ with the basis $\{H(p),X(m),Y(n),c\mid p,m,n\in \mathbb{Z}\}$, satisfying the commutation relations (\ref{eq:2.1}), there is also the Kac-Moody definition by the Chevalley generators $\big\{h_i,e_j,f_k|i,j,k\in\{0,1\}\big\}$, subject to the conditions
\begin{equation}
[h_i,e_j]=A_{ij}e_j,[h_i,f_j]=-A_{ij}f_j,[e_i,f_j]=\delta_{ij}h_j,
\end{equation}
where $A=(A_{ij})=\left(\begin{array}{cc}
     2 & -2\\
     -2 & 2
   \end{array}\right)
$ is the generalized Cartan matrix.

The two equivalent descriptions of ${A^{(1)}_1}$ are related under the following correspondence
$$
\begin{array}{cc}
&e_0\leftrightarrow Y(1),f_0\leftrightarrow X(-1),h_0=-H(0)+c,\\
&e_1\leftrightarrow X(0),f_1\leftrightarrow Y(0),h_1=H(0)
\end{array}
$$

Recall that the weight space $V_\mu =\{v\in V\mid h\cdot v=\mu(h)v,\forall h\in \eta\}$, where $\eta=(\oplus_{n\in \mathbb{Z}}H(n))\oplus \mathbb{C}c\oplus \mathbb{C}d$ is the Cartan  subalgebra of $\widehat{sl}_2(\mathbb{C})$. A highest weight module $V(\lambda)=\oplus_{\mu\leq \lambda} V_\mu$, or the highest weight representation , is the space generated by a highest weight vector $v_{\lambda}$ of weight $\lambda$ such that $e_iv_{\lambda}=0, h_iv_{\lambda}=\lambda(h_i)v_{\lambda}$.
The central element $c$ acts on $V(\lambda)$ as a scalar $k$, which will be called the level of the
module.

The elements of Heisenberg subalgebra ${\mathfrak h}'=(\oplus_{n\neq0}H(n))\oplus \mathbb{C}c$ of $A^{(1)}_1$
obey the following relations which are special cases of Eq. (\ref{eq:2.1}):
\begin{equation}
[H(m),H(n)]=2mc\delta_{m+n,0}
\end{equation}

Given level $-k$, any negative integer. Let $S({\mathfrak h'}^-)$ the space of symmetric polynomials generated by elements in ${\mathfrak h'}^-=\oplus_{n<0}H(n)$. Then there is a canonical representation of the Heisenberg algebra ${\mathfrak h}'$ on $S({\mathfrak h'}^-)$ via the actions in accordance with Eq. (\ref{eq:2.1}):
\begin{equation}
\begin{array}{lll} &c\cdot 1=k,H(0)\cdot v=0,\\&H(m)\cdot v=H(m)v,m<0\\&H(m)\cdot v=-2mk\partial_{H(m)}(v),m>0
\end{array}
\end{equation}

In fact, this can be verified pretty straightforward, one needs only to observe that $[H(m),H(n)]\cdot v=-2mk\delta_{{_m+n,0}}v$ is valid under bracket relations.

We denote by $a(z)=\sum_{m\in \mathbb{Z}}a(m)z^{-m}$ the power formal series. Here and later, $z,w$ mean any formal variables. In this form,
${A^{(1)}_1}$  is usually called a current algebra. To write commutation relations in formal series, we need to introduce the formal $\delta$-function $\delta(\frac{w}{z})=\sum_{m\in \mathbb{Z}}(\frac{w}{z})^n$ , which possesses the fundamental property: for any $f(w,z)\in End(V)[[w,w^{-1},z,z^{-1}]]$ such that
$$lim_{w\rightarrow z}f(w,z)=f(z,z), \quad f(w,z)\delta(\frac{w}{z})=f(z,z)\delta(\frac{w}{z})$$
exists. More information on delta functions can be found in \cite{s15}.

The commutation relations of the affine Lie algebra can now be given as follows.
\begin{equation}\mathrm{}\label{eq:2.6}
\begin{array}{lll} &[H(z),X(w)]=2X(w)\delta(\frac{w}{z})\\&[H(z),Y(w)]=-2X(w)\delta(\frac{w}{z})
\\&[X(z),Y(w)]=H(w)\delta(\frac{w}{z})-kw\partial_w\delta(\frac{w}{z})
\end{array}
\end{equation}\\

\subsection{Parafermions}
 We now discuss the parafermion theory \cite{s14}. Let $\Phi$ be the root system of the simple Lie algebra $\mathfrak g$ and let $M$ (or $M \, mod \, kM_L$) denote the root lattice spanned by $\Phi$, where $-k$ is identified with the level in the corresponding affine Lie algebra $\hat{g}$ and $M_L$ is the long root sublattice. Let
 $E_{\alpha}$ be the root vector of $\mathfrak g$, and we normalize the Chevalley basis of $\mathfrak g$
 via $[E_{\alpha}, E_{\beta}]=\epsilon_{\alpha\beta}E_{\alpha+\beta}$ if $\alpha+\beta\in\Phi$. It is well-known that $\epsilon_{\alpha\beta}\in\mathbb Z$.

 General parafermion is defined for elements of $M$, but we will focus on parafermionic fields $\boldsymbol{\psi}_{\alpha}(z), \boldsymbol{\psi}_{\beta}(w)$ for
 roots $\alpha, \beta\in\Phi$ \cite{s13}.
 For two such parafermions the radial ordered product is defined as a multivalued function owning to the mutually semilocal property between them (cf. \cite{s14}). Instead of (anti-)commtativity the key relation is
\begin{equation}\label{eq:2.7}
R(\boldsymbol{\psi}_\alpha(z)\boldsymbol{\psi}_\beta(w))=(-1)^{\frac{\alpha\beta}{-k}}
R(\boldsymbol{\psi}_\beta(w)\boldsymbol{\psi}_\alpha(z)).
\end{equation}

For simplicity we will drop the symbol $R$. For $\alpha,\beta \in \Phi$ the operator product expansion for two parafermions can be formulated as (cf. \cite{s9})
\begin{equation}\label{eq:2.8}
\boldsymbol{\psi}_\alpha(z)\boldsymbol{\psi}_\beta(w)(z-w)^{-\frac{\alpha\beta}{k}}
=N\big(\boldsymbol{\psi}_\alpha(z)\boldsymbol{\psi}_\beta(w)\big) +\frac{\varepsilon_{\alpha,\beta}}{z-w}{\boldsymbol{\psi}_{\alpha+\beta}(w)}+
\frac{\delta_{\alpha+\beta,0}I_{\psi_{\alpha}(z)\psi_{\beta}(w)}}{(z-w)^2}
\end{equation}
in which $N\big(\boldsymbol{\psi}_\alpha(z)\boldsymbol{\psi}_\beta(w)\big)$ can be seen as infinitesimal of higher order in $z-w$, namely inside $\mathcal{O}(z-w)$. Note that the regular part of the expression
in parentheses satisfy
\[N\big(\boldsymbol{\psi}_\alpha(z)\boldsymbol{\psi}_\beta(w)\big)=N\big(\boldsymbol{\psi}_\beta(w)
\boldsymbol{\psi}_\alpha(z)\big),
\]
and
$$
\varepsilon_{\alpha,\beta}=\left\{\begin{array}{ll}\epsilon_{\alpha\beta}/{\sqrt{-k}}, & \textrm{if $\alpha+\beta\in\Phi$}\\
$0$, & \textrm{otherwise}\end{array}\right. ,
$$
where $I_{\psi_\alpha(z)\psi_\beta(w)}$ are some constants to be fixed later.

According to the parafermion theory, the conformal dimension of $\boldsymbol{\psi}_l(z), \, l\in\Phi$ is defined by $\Delta_l=\frac{l^2}{2k}+n(l)$ \cite{s13}, where $n(l)$ is the minimal number of roots $\alpha_i$ in $\Phi$ by which $l$ can be composed,
$\alpha=\sum_{i=1}^{n(l)}\alpha_i$. Note that Eq. (\ref{eq:2.8}) can be equivalently written as
\begin{equation}\label{eq:2.9}
\boldsymbol{\psi}_\alpha(z)\boldsymbol{\psi}_\beta(w)=(z-w)^{\Delta_{\alpha+\beta}-
\Delta_\alpha-\Delta_\beta}[\delta_{\alpha+\beta,0}I_{\boldsymbol{\psi}_{\alpha}(z)\boldsymbol{\psi}_{\beta}(w)}
+\varepsilon_{\alpha,\beta}\boldsymbol{\psi}_{\alpha+\beta}(w)+\ldots].
\end{equation}
In this case $\Delta_{\alpha}=\Delta_{-\alpha}=1$ and $\Delta_0=0$, therefore Eq. (\ref{eq:2.9}) is simply
\begin{align*}
&\boldsymbol{\psi}_\alpha(z)\boldsymbol{\psi}_\beta(w)(z-w)^{-\frac{\alpha\beta}{k}}\\
&\qquad=(z-w)^{n(\alpha+\beta)-2}[
\delta_{\alpha+\beta,0}I_{\psi_{\alpha}(z)\psi_{\beta}(w)}+
\varepsilon_{\alpha,\beta}\boldsymbol{\psi}_{\alpha+\beta}+\mathcal{O}(z-w)].
\end{align*} 

For
$\boldsymbol{\psi}_{\pm\alpha}(z)$ associated to Lie algebra $sl_2(\mathbb{C})$, we find that $I_{\psi_{\alpha}(z)\psi_{\beta}(w)}=-kzw$ for $\alpha=-\beta$, and it is $1$ otherwise. We define the normal ordered product
\begin{equation}\label{eq:2.10}
\boldsymbol{:}\boldsymbol{\psi}_\alpha(z)\boldsymbol{\psi}_\beta(w)\boldsymbol{:}(z-w)^{-\frac{\alpha\beta}{k}}
=N\big(\boldsymbol{\psi}_\alpha(z)\boldsymbol{\psi}_\beta(w)\big).
\end{equation}

We define the contraction function by
\begin{equation}\label{eq:2.11}
\underbrace{\boldsymbol{\psi}_\alpha(z)\boldsymbol{\psi}_\beta(w)}(z-w)^{-\frac{\alpha\beta}{k}}= \boldsymbol{\psi}_\alpha(z)\boldsymbol{\psi}_\beta(w)(z-w)^{-\frac{\alpha\beta}{k}}-\boldsymbol{:}\boldsymbol{\psi}_\alpha(z)\boldsymbol{\psi}_\beta(w)\boldsymbol{:}(z-w)^{-\frac{\alpha\beta}{k}}  \end{equation}

 Then Eq. (\ref{eq:2.8}) can be written by
\begin{equation}\label{eq:2.12} \boldsymbol{\psi}_\alpha(z)\boldsymbol{\psi}_\beta(w)(z-w)^{-\frac{\alpha\beta}{k}}=\left\{\begin{array}{ll}
\boldsymbol{:}\boldsymbol{\psi}_\alpha(z)\boldsymbol{\psi}_\alpha(w)\boldsymbol{:}(z-w)^{-\frac{2}{k}}, & if  \alpha=\beta\\[0.2cm]
\boldsymbol{:}\boldsymbol{\psi}_\alpha(z)\boldsymbol{\psi}_{-\alpha}(w)\boldsymbol{:}(z-w)^{\frac{2}{k}}+\frac{-kzw}{(z-w)^2},& if \alpha=-\beta\end{array}\right.
\end{equation}\\

Now for the affine Lie algebra $\widehat{sl}_2$ we define the $Z$-algebra operators $A_{\alpha}(z)$, $A_{-\alpha}^{\ast}(z)$ for $\boldsymbol{\psi}_\alpha(z),\boldsymbol{\psi}_{-\alpha}(z)$. Using Eq. (\ref{eq:2.12}), we get the following
equations:
\begin{equation}\label{eq:2.13}
\begin{array}{lll} &\underbrace{A_{\alpha}(z)A_{\alpha}(w)}(z-w)^{-\frac{2}{k}}=
\underbrace{A_{-\alpha}^{\ast}(z)A_{-\alpha}^{\ast}(w)}(z-w)^{-\frac{2}{k}}=0, \\[0.1cm]
&\underbrace{A_{\alpha}(z)A_{-\alpha}^{\ast}(w)}(z-w)^{\frac{2}{k}}=\frac{-kzw}{(z-w)^2},\\[0.1cm]
&\underbrace{A_{-\alpha}^{\ast}(w)A_{\alpha}(z)}(w-z)^{\frac{2}{k}}=\frac{-kzw}{(w-z)^2}.
\end{array}
\end{equation}

The operator $A_\alpha$ (or $A^*_{\alpha}$) acts on the field operator $\Phi_{\lambda,\bar{\lambda}}(w,\bar{w})$ with charge $(\lambda,\bar{\lambda})$ (cf. \cite{s9,s11,s12,s13}) as follows.
\begin{equation}\label{eq:2.14}
A_\alpha(z)\Phi_{\lambda,\bar{\lambda}}(w,\bar{w})=\sum_{m=\infty}^
\infty(z-w)^{-m-1+\frac{\alpha\lambda}{k}}A_{m}^{\alpha,\lambda}\Phi_{\lambda,\bar{\lambda}}(w,\bar{w}),
\end{equation}
then the component operator $A_{m}^{\alpha,\lambda}$ acts on $\Phi_{\lambda,\bar{\lambda}}$ via
\[
A_{m}^{\alpha,\lambda}\Phi_{\lambda,\bar{\lambda}}(w,\bar{w})=
\int_w{\frac{dz}{2\pi{i}}(z-w)^{m-\frac{\alpha\lambda}{2k}}\boldsymbol{A}_\alpha(z)
\Phi_{\lambda,\bar{\lambda}}(w,\bar{w})}. \]

We are interested only in parafermions $\boldsymbol{\psi}_\alpha$, carrying charge $(\alpha,0)$ with $\Phi_{\alpha,0}(w,\bar{w})$ \cite{s13}:
\begin{equation}\label{eq:2.15}
A_{\alpha}(z)\Phi_{\alpha,0}(w,\bar{w})=
\sum_{m=\infty}^{\infty}(z-w)^{-m-1+\frac{2}{k}}{A_{m}^{\alpha,\alpha}\Phi_{\alpha,0}(w,\bar{w})}.
\end{equation}

\subsection{Action of the group algebra}
The group algebra $\mathbb{C}
(\mathbb{Z}\alpha)$ is the associative algebra generated by $e^{n\alpha}$ ($n\in\mathbb Z$) under the multiplication
\begin{equation}\label{eq:3.1}
e^0=1, \qquad e^{m\alpha}\cdot e^{n\alpha}=e^{(m+n)\alpha}.
\end{equation}
where $m, n\in\mathbb{Z}$. The group algebra $\mathbb{C}
(\mathbb{Z}\alpha)$ acts on itself via multiplication, and we also introduce the operator $h(0)$ ($h\in \mathfrak h$) which acts on $\mathbb{C}
(\mathbb{Z}\alpha)$ by
$$
\begin{array}{ll}
h(0):&\mathbb{C}(\mathbb{Z}\alpha)\rightarrow\mathbb{C}(\mathbb{Z}\alpha)\\[0.2cm]
&\mspace{33mu}e^\alpha\mapsto\langle h,\alpha \rangle e^\alpha
\end{array}
$$
so we get $[h(0),e^\alpha]=\langle h,\alpha\rangle e^\alpha$.
Using the operator $h(0)$ we naturally define the operator $z^h\in(End\mathbb{C}(\mathbb{Z}\alpha))\{z\}$(can be seen as $ z^{h(0)})$ for $h\in\mathbb{Z}\alpha$ by
\begin{equation}\label{eq:3.2}
z^h\cdot e^{\alpha}=z^{\langle h,\alpha\rangle}e^\alpha.
\end{equation}
Then we get
\begin{equation}\label{eq:3.3}
\begin{array}{cc}
[\alpha(0),z^\beta]=0,\\[0.2cm]
z^{\alpha}e^{\beta}=z^{\langle\alpha,\beta\rangle}e^{\beta}z^{\alpha}=e^{\beta}z^{\alpha+\langle\alpha,\beta\rangle}.
\end{array}
\end{equation}


\section{Construction of the Parafermion Representations of ${A^{(1)}_1}$ and $Z$-algebra}
\label{sec:3}
\subsection{Action of Heisenberg subalgebra}
We define the following exponential operators on the space $S({\mathfrak h'}^-)$ and their properties are given in Proposition \ref{prop:3.1}.
$$
\begin{array}{cc}
&E_+^{\pm}(z)=exp(\mp\sum\limits_{n>0}\frac{H(-n)}{kn}z^n)\\
&E_-^{\pm}(z)=exp(\pm\sum\limits_{n>0}\frac{H(n)}{kn}z^{-n})
\end{array}
$$
\\

\begin{proposition}\label{prop:3.1} On the space $S({\mathfrak h'}^-)$ we have
\begin{equation}\label{eq:3.1}
\begin{array}{lll}
&E^+_{\pm}(z)E^-_{\pm}(z)=E^-_{\pm}(z)E^+_{\pm}(z)=1\\[0.2cm]
&E^+_+(z)E^{\mp}_+(z)=E^{\mp}_+(z)E^+_+(z)\\[0.2cm]
&E^-_+(z)E^{\mp}_{+}(z)=E^{\mp}_{+}(z)E^-_+(z)\\[0.2cm]
&\partial_z(E_+^{\pm}(z)E_-^{\pm}(z))=\mp E_-^{\pm}(z)E_+^{\pm}(z)\sum\limits_{n\neq0}\frac{H(n)}{k}z^{-n-1}\end{array}
\end{equation}

\begin{equation}\label{eq:3.2}
\begin{array}{ll}
&E^+_{\pm}(z)E^+_{\pm}(w)=E^+_{\pm}(w)E^+_{\pm}(z)\mspace{155mu}\\[0.2cm]
&E^-_{\pm}(z)E^-_{\pm}(w)=E^-_{\pm}(w)E^-_{\pm}(z)\
\end{array}  
\end{equation}

\begin{equation}
\begin{array}{lll}\label{eq:3.3}
&E^+_{\pm}(z)E^-_{\pm}(w)=E^-_{\pm}(w)E^+_{\pm}(z)\mspace{155mu}\\[0.2cm]
&E^{\pm}_+(z)E^{\mp}_-(w)=E^{\mp}_-(w)E^{\pm}_+(z)(1-\frac{z}{w})^{-\frac{2}{k}}\\[0.2cm]
&E^{\pm}_+(z)E^{\pm}_-(w)=E^{\pm}_-(w)E^{\pm}_+(z)(1-\frac{z}{w})^{\frac{2}{k}}
\end{array} 
\end{equation}
\end{proposition}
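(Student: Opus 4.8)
The plan is to reduce every assertion in the proposition to the single Heisenberg relation established earlier, namely $[H(m),H(n)]=-2mk\,\delta_{m+n,0}$ on $S(\mathfrak h'^-)$. Two consequences of this relation drive everything: first, $[H(m),H(n)]=0$ whenever $m,n$ have the same sign (since then $m+n\neq0$), and second, $[H(-m),H(n)]=2mk\,\delta_{m,n}$ for $m,n>0$. The identities then fall into three tiers of increasing difficulty, and I would handle them in that order.

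First I would dispose of all the ``same type'' statements at once. Each $E^{\pm}_+(z)$ is an exponential of a series in the creation modes $H(-n)$, $n>0$, and these modes mutually commute; likewise each $E^{\pm}_-(w)$ is built from the mutually commuting annihilation modes $H(n)$, $n>0$. Hence any two $E_+$ factors commute with one another (at arbitrary arguments and signs), and any two $E_-$ factors commute. This immediately yields both lines of the second displayed group, the first line of the third group, and the second and third lines of the first group. The inversion relations in the first line of the first group are equally direct: $E^+_{\pm}$ and $E^-_{\pm}$ have exponents that are exact negatives of each other and commute, so the product telescopes to the identity.

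Second, for the derivative identity (last line of the first group) I would apply the Leibniz rule. Since each exponent commutes with its own $z$-derivative, one has $\partial_z E^{\pm}_+(z)=E^{\pm}_+(z)\bigl(\mp\sum_{n>0}\tfrac{H(-n)}{k}z^{n-1}\bigr)$ and the analogous formula for $E^{\pm}_-(z)$. Applying the product rule and then re-indexing $n\mapsto -n$ in the creation part, the two derivative series merge into the single sum $\mp\sum_{n\neq0}\tfrac{H(n)}{k}z^{-n-1}$. The only bookkeeping is to commute the surviving factor past $E^{\pm}_+$ or $E^{\pm}_-$, which is legitimate by the same-type commutativity from the previous step.

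The real content, and the step I expect to be the main obstacle, is the pair of ``mixed'' relations in the last two lines of the third group, where a creation exponential at $z$ is transported past an annihilation exponential at $w$. Here I would invoke the Baker--Campbell--Hausdorff identity in its central form $e^Ae^B=e^Be^A\,e^{[A,B]}$, which is valid once $[A,B]$ is shown to be a scalar. Taking $A=\log E^{\pm}_+(z)$ and $B$ equal to $\log E^{\mp}_-(w)$ or $\log E^{\pm}_-(w)$, the commutator collapses through $[H(-m),H(n)]=2mk\,\delta_{m,n}$ to a scalar multiple of $\sum_{m>0}\tfrac1m(z/w)^m=-\log(1-\tfrac{z}{w})$, and exponentiating this c-number reproduces exactly the factor $(1-\tfrac{z}{w})^{\pm2/k}$, with the sign governed by whether the two superscripts agree. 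The delicate points to verify are that $[A,B]$ is genuinely central—so the higher BCH terms vanish and the two-term formula applies—and that the fractional power $(1-\tfrac{z}{w})^{\pm2/k}$ must be read as its binomial expansion in the region $|z|<|w|$, which is where the summation of the commutator series into a logarithm is meaningful.
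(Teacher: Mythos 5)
Your proposal is correct and follows essentially the same route as the paper: same-sign modes commuting ($[H(m),H(n)]=0$ for $m+n\neq 0$) handles all the commutativity and inversion identities, and the central form of Baker--Campbell--Hausdorff, $e^Ae^B=e^Be^Ae^{[A,B]}$, with the commutator summing to a multiple of $\log(1-\frac{z}{w})$, gives the mixed relations with the factors $(1-\frac{z}{w})^{\pm\frac{2}{k}}$ exactly as in the paper. The only cosmetic difference is the derivative identity, where you use the Leibniz rule on the two factors while the paper merges them into a single exponential $\exp(-\sum_{n\neq 0}\frac{H(n)}{kn}z^{-n})$ before differentiating; both arguments are equally elementary and gloss over the same ordering subtlety at coincident points that the paper itself ignores.
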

\begin{proof} These identities are proved by the Campbell-Hausdorf-Witt theorem. The commutativity relations are easy
consequence of the fact that $H(m)$ and $H(n)$ commute if $m\neq -n$. For the other identities we compute that
$$
\partial_z(E^-_+(z)E^-_-(z))=\partial_z(exp(-\textstyle\sum\limits_{n\neq0}\frac{H(n)}
{kn}z^{-n}))=E^-_+(z)E^-_-(z)\sum\limits_{n\neq0}\frac{H(n)}{k}z^{-n-1}.
$$

For the last two relations in Eq. (\ref{eq:3.3}), we use the identity $e^{x_1}e^{x_2}=e^{x_2}e^{x_1}e^{[x_1,x_2]}$ if $x_1$, $x_2$ commute
with $[x_1, x_2]$:
$$
\begin{array}{lll}
E^+_+(z)E^-_-(w)&=E^-_+(w)E^+_+(z)exp([\sum\limits_{m>0}\frac{-H(-m)}{km}z^m,-\sum\limits_{n>0}\frac{H(n)}{kn}w^{-n}])\\
&=E^-_+(w)E^+_+(z)exp(-\sum\limits_{m,n>0}\frac{2mc\delta_{m-n,0}}{k^2mn}z^mw^{-n})\\
&=E^-_+(w)E^+_+(z)(1-\frac{z}{w})^{-\frac{2}{k}}.
\end{array}
$$
\qed
\end{proof}

\subsection{The realization}
Let $V=S({\mathfrak h'}^-)\otimes <\Phi_{\alpha,0}(\omega,\overline{\omega})>\otimes \mathbb{C}(\mathbb{Z}\alpha)$,
we define the map $\pi :\widehat{sl}_2(\mathbb{C})\rightarrow End(V)\{z\}$ as follows:
$$
\begin{array}{llll}
&X(z)\longmapsto E^+_+(z)E^+_-(z)\otimes A_\alpha(z)e^\alpha z^{-\frac{\alpha}{k}}\\
&Y(z)\longmapsto E^-_+(z)E^-_-(z)\otimes A_{-\alpha}^{\ast}(z)e^{-\alpha} z^{\frac{\alpha}{k}}\\
&H(z)\longmapsto H(z)\otimes 1\\
&c\longmapsto -k\\
&d\longmapsto deg.
\end{array}
$$
\begin{theorem}\label{th:3.2}
$(\pi,V)$ defines a representation of ${A^{(1)}_1}$.
\end{theorem}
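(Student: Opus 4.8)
The plan is to show that $\pi$ preserves every defining relation of ${A^{(1)}_1}$. Since $\pi(c)=-k$ is a scalar and $\pi(d)=\deg$ acts as the grading derivation, the relations $[c,\,\cdot\,]=0$ and $[d,a(m)]=ma(m)$ hold at once. Because $\pi(H(z))=H(z)\otimes 1$ is, by construction, the canonical level $-k$ Heisenberg action on $S({\mathfrak h'}^-)$ recalled in Section 2, the relation $[H(m),H(n)]=2mc\,\delta_{m+n,0}$ (here $2mc=-2mk$) is automatic. It therefore remains only to verify the three mixed relations in (\ref{eq:2.6}), and the whole theorem reduces to these formal-series computations.

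For $[H(z),X(w)]=2X(w)\delta(\tfrac{w}{z})$ and $[H(z),Y(w)]=-2Y(w)\delta(\tfrac{w}{z})$, I would note first that $\pi(H(z))$ commutes with the $Z$-algebra factor $A_{\pm\alpha}(w)$ and with the scalar power $w^{\mp\alpha/k}$, so only two sources feed the bracket: the commutator of the nonzero modes $H(m)$ with the exponentials $E^{+}_{\pm}(w)$, and the eigenvalue $\langle H(0),\alpha\rangle=2$ of the zero mode on $e^{\alpha}$. Using $[H(m),H(n)]=-2mk\,\delta_{m+n,0}$ one finds $[H(m),E^{+}_{+}(w)]=2w^{m}E^{+}_{+}(w)$ for $m>0$ and $[H(m),E^{+}_{-}(w)]=2w^{m}E^{+}_{-}(w)$ for $m<0$, while the zero mode supplies the $m=0$ term through the group-algebra action of Section 2.3. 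Summing against $z^{-m}$ and using $\sum_{m}w^{m}z^{-m}=\delta(\tfrac{w}{z})$ yields exactly $2X(w)\delta(\tfrac{w}{z})$; the computation for $Y$ is identical up to the sign from $\langle H(0),-\alpha\rangle=-2$.

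The essential relation, and the main obstacle, is $[X(z),Y(w)]=H(w)\delta(\tfrac{w}{z})-kw\,\partial_w\delta(\tfrac{w}{z})$. My plan is to compute the two orderings $\pi(X(z))\pi(Y(w))$ and $\pi(Y(w))\pi(X(z))$ and subtract, reducing each to a common normal-ordered operator times a scalar kernel. Each product splits into a bosonic piece (the $E$'s), a $Z$-algebra piece, and a group-algebra piece ($e^{\pm\alpha}$ together with the fractional-power operators $z^{\mp\alpha/k}$, $w^{\pm\alpha/k}$). Normal ordering the bosonic piece produces, by the last line of Proposition \ref{prop:3.1}, a factor $(1-\tfrac{z}{w})^{\pm2/k}$; the $Z$-algebra piece contributes the contraction of (\ref{eq:2.13}), namely $\tfrac{-kzw}{(z-w)^{2}}(z-w)^{-2/k}$ in the region $|z|>|w|$ and its $z\leftrightarrow w$ reflection in $|w|>|z|$; and the operators $e^{\pm\alpha}$ cancel while the $z^{\mp\alpha/k}$ reorder past $e^{\mp\alpha}$ through $z^{\alpha}e^{\beta}=z^{\langle\alpha,\beta\rangle}e^{\beta}z^{\alpha}$, contributing further fractional powers.

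The decisive point is that all fractional powers of $(z-w)$ cancel: the $(z-w)^{-2/k}$ from the parafermionic contraction is exactly compensated by the $(1-\tfrac{z}{w})^{2/k}$ from the bosonic reordering (once the contributions of the $z^{\pm\alpha/k}$ are absorbed), leaving an integer power and a single-valued rational kernel $\tfrac{-kzw}{(z-w)^{2}}$. This collapse of the multivalued parafermion product is precisely what turns the $Z$-algebra into ${A^{(1)}_1}$, and it is the step I expect to demand the most care. Once the kernel is rational, I would expand it together with the normal-ordered bosonic factor in the two regions $|z|>|w|$ and $|w|>|z|$ and take the difference; the first-order Taylor coefficient of the bosonic factor, identified through the derivation formula in Proposition \ref{prop:3.1}, supplies the current $H(w)$, so the simple pole yields $H(w)\delta(\tfrac{w}{z})$ and the double pole yields $-kw\,\partial_w\delta(\tfrac{w}{z})$ via the standard delta-function identities. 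Matching against (\ref{eq:2.6}) then completes the proof.
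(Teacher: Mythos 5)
Your strategy coincides with the paper's own proof: verify the relations as formal-series identities, and for each bracket reorder the exponentials $E^{\pm}_{\pm}$ via Proposition \ref{prop:3.1}, reorder the group-algebra factors via $z^{\alpha}e^{\beta}=z^{\langle\alpha,\beta\rangle}e^{\beta}z^{\alpha}$, and let the resulting factors $(1-\frac{w}{z})^{\pm 2/k}$ and $z^{\pm 2/k}$ absorb the fractional powers so that the parafermion contractions of Eq.~(\ref{eq:2.13}) produce single-valued kernels. Your outlines of $[H(z),X(w)]$, $[H(z),Y(w)]$ and $[X(z),Y(w)]$ match the paper's computations, including the key point that $H(w)\delta(\frac{w}{z})$ is generated by the $w$-dependence of the bosonic and fractional-power factors sitting against the double pole: the paper extracts it by applying the Leibniz rule to $-kw\partial_w\delta(\frac{w}{z})$ and invoking the derivation identity of Proposition \ref{prop:3.1}, while you extract it by Taylor-expanding the coefficient of the double pole; these are the same step.

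There is, however, one genuine gap: your opening reduction is incomplete. The defining relations $[a(m),b(n)]=[a,b](m+n)+\mathrm{Tr}(ab)\,mc\,\delta_{m+n,0}$ must hold for \emph{all} pairs $a,b\in\{H,X,Y\}$, which includes $[X(m),X(n)]=0$ and $[Y(m),Y(n)]=0$ (since $[X,X]=0$ and $\mathrm{Tr}(X^{2})=0$). These are not vacuous in the realization: $\pi(X(z))\pi(X(w))$ involves the multivalued product $A_{\alpha}(z)A_{\alpha}(w)$ against the factor $(z-w)^{-2/k}$, and the vanishing of the bracket requires both the vanishing contraction $\underbrace{A_{\alpha}(z)A_{\alpha}(w)}(z-w)^{-2/k}=0$ of Eq.~(\ref{eq:2.13}) and the symmetry $N\big(\boldsymbol{\psi}_{\alpha}(z)\boldsymbol{\psi}_{\beta}(w)\big)=N\big(\boldsymbol{\psi}_{\beta}(w)\boldsymbol{\psi}_{\alpha}(z)\big)$ of the normal-ordered product. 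Despite its phrasing that it ``just checks the relations in Eq.~(\ref{eq:2.6})'', the paper devotes the first part of its proof precisely to $[X(z),X(w)]=0$ and $[Y(z),Y(w)]=0$. The verification goes through by exactly the techniques you describe for $[X(z),Y(w)]$ --- indeed it is easier, since no pole survives --- so the gap is readily repaired, but as written your plan does not establish all the defining relations and hence does not prove the theorem.
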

\begin{proof} For convenience, we just check the relations in Eq. (\ref{eq:2.6}).
\begin{eqnarray*}X(z)X(w)&\mapsto &\mspace{-8mu}E^+_+(z)E^+_-(z)E^+_+(w)E^+_-(w)\otimes A_\alpha(z)e^\alpha z^{-\frac{\alpha}{k}} A_\alpha(w)e^\alpha w^{-\frac{\alpha}{k}}\\
&=&\mspace{-8mu}E^+_+(z)E^+_+(w)E^+_-(z)E^+_-(w)(1-\frac{w}{z})^{-\frac{2}{k}}\otimes A_{\alpha}(z)A_\alpha(w) e^{2\alpha}z^{-\frac{\alpha}{k}-\frac{2}{k}}w^{-\frac{\alpha}{k}}\\
&=&\mspace{-8mu}E^+_+(z)E^+_+(w)E^+_-(z)E^+_-(w)\otimes\boldsymbol{:}A_{\alpha}(z)A_\alpha(w)\boldsymbol{:}(z-w)^{-\frac{2}{k}}e^{2\alpha}(zw)^{-\frac{\alpha}{k}}\\
&&\mspace{-8mu}+E^+_+(z)E^+_+(w)E^+_-(z)E^+_-(w)\otimes\underbrace{A_{\alpha}(z)A_\alpha(w)}(z-w)^{-\frac{2}{k}}e^{2\alpha}(zw)^{-\frac{\alpha}{k}}\\
&=&\mspace{-8mu}E^+_+(z)E^+_+(w)E^+_-(z)E^+_-(w)\otimes\boldsymbol{:}A_{\alpha}(z)A_\alpha(w)\boldsymbol{:}(z-w)^{-\frac{2}{k}}e^{2\alpha}(zw)^{-\frac{\alpha}{k}}
\end{eqnarray*}

Hence,\begin{align*}
&\lefteqn{[X(z),X(w)]\;= X(z)X(w)-X(w)X(z)}\\
&=E^+_+(z)E^+_+(w)E^+_-(z)E^+_-(w)\otimes\big(\boldsymbol{:}
A_{\alpha}(z)A_\alpha(w)\boldsymbol{:}(z-w)^{-\frac{2}{k}}\\
&\qquad\qquad-\boldsymbol{:}A_{\alpha}(w)A_\alpha(z)\boldsymbol{:}(w-z)^{-\frac{2}{k}}\big)
e^{2\alpha}(zw)^{-\frac{\alpha}{k}}\\
&=0
\end{align*}

By similar method we get $[Y(z)Y(w)]=0$. Next notice that
\begin{eqnarray*}
\lefteqn{X(z)Y(w)\;\mapsto E^+_+(z)E^+_-(z)E^-_+(w)E^-_-(w)\otimes A_\alpha(z)e^\alpha z^{-\frac{\alpha}{k}}A_{-\alpha}^{\ast}(w)e^{-\alpha} w^{\frac{\alpha}{k}}}\\
&&=E^+_+(z)E^-_+(w)E^+_-(z)E^-_-(w)(1-\frac{w}{z})^{\frac{2}{k}}\otimes\big(\boldsymbol{:}A_{\alpha}(z)A_{-\alpha}^{\ast}(w)\boldsymbol{:}+\underbrace{A_{\alpha}(z)A_{-\alpha}^{\ast}(w)}\big)e^{\alpha-\alpha}z^{-\frac{\alpha}{k}+\frac{2}{k}}w^{\frac{\alpha}{k}}\\
&&=E^+_+(z)E^-_+(w)E^+_-(z)E^-_-(w)\otimes\boldsymbol{:}A_{\alpha}(z)A_{-\alpha}^{\ast}(w)\boldsymbol{:}(z-w)^{\frac{2}{k}}z^{-\frac{\alpha}{k}}w^{\frac{\alpha}{k}}\\
&&\mspace{18mu}+E^+_+(z)E^-_+(w)E^+_-(z)E^-_-(w)\otimes\frac{-kzw}{(z-w)^2}z^{-\frac{\alpha}{k}}w^{\frac{\alpha}{k}}
\end{eqnarray*}

Thus we get the computation as expected
\begin{eqnarray*}
\lefteqn{[X(z),Y(w)]=X(z)Y(w)-Y(w)X(z)}\\
&&\mspace{-8mu}=E^+_+(z)E^-_+(w)E^+_-(z)E^-_-(w)\otimes\big(\boldsymbol{:}A_{\alpha}(z)A_{-\alpha}^{\ast}(w)\boldsymbol{:}(z-w)^{\frac{2}{k}}-\boldsymbol{:}A_{-\alpha}^{\ast}(w)A_{\alpha}(z)\boldsymbol{:}(w-z)^{\frac{2}{k}}\big)z^{-\frac{\alpha}{k}}w^{\frac{\alpha}{k}}\\
&&\mspace{-8mu}+E^+_+(z)E^-_+(w)E^+_-(z)E^-_-(w)\otimes\big(\frac{-kzw}{(z-w)^2}-\frac{-kzw}{(w-z)^2}\big)z^{-\frac{\alpha}{k}}w^{\frac{\alpha}{k}}\\
&&\mspace{-8mu}=-kE^+_+(z)E^-_+(w)E^+_-(z)E^-_-(w)z^{-\frac{\alpha}{k}}w^{\frac{\alpha}{k}}w\partial_{_w}{\delta(\frac{w}{z})}\\
&&\mspace{-8mu}=-kw\partial_{_w}\big(E^+_+(z)E^-_+(w)E^+_-(z)E^-_-(w)z^{-\frac{\alpha}{k}}w^{\frac{\alpha}{k}}\delta(\frac{w}{z})\big)+kw\partial_{_w}(E^+_+(z)E^-_+(w)E^+_-(z)E^-_-(w))z^{-\frac{\alpha}{k}}w^{\frac{\alpha}{k}}\delta(\frac{w}{z})\\
&&\mspace{-8mu}+kwE^+_+(z)E^-_+(w)E^+_-(z)E^-_-(w)\partial_{_w}(z^{-\frac{\alpha}{k}}w^{\frac{\alpha}{k}})\delta(\frac{w}{z})\\
&&\mspace{-8mu}=-kw\partial_{_w}{\delta(\frac{w}{z})}+kw\textstyle\sum\limits_{n\neq0}\frac{H(n)}{k}w^{n-1}w^{\frac{\alpha}{k}}z^{-\frac{\alpha}{k}}\delta(\frac{w}{z})+\alpha w^{\frac{\alpha}{k}}z^{-\frac{\alpha}{k}}\delta(\frac{w}{z})\\
&&\mspace{-8mu}=-kw\partial_{_w}{\delta(\frac{w}{z})}+\textstyle\sum\limits_{n\neq0}{H(n)}w^{-n}\delta(\frac{w}{z})+H(0)\delta(\frac{w}{z})\\
&&\mspace{-8mu}=H(w)\delta(\frac{w}{z})-kw\partial_{_w}{\delta(\frac{w}{z})}.
\end{eqnarray*}

It is easy to compute that
\begin{eqnarray*}
[H(z),E^-_+(w)]&=&\mspace{-8mu}\textstyle\sum\limits_{m\in\mathbb{Z}}[H(m),e^{\sum\limits_{n>0}\frac{H(-n)}{kn}w^n}]z^{-m}=
E^-_+(w)\sum\limits_{m\in\mathbb{Z}\atop n>0}{\frac{[H(m),H(-n)]}{kn}}z^{-m}w^n\\
&=&E^-_+(w)\textstyle\sum\limits_{m\in\mathbb{Z}\atop n>0}
{\frac{2mc\delta_{m-n,0}}{kn}z^{-m}w^n=E^-_+(w)\sum\limits_{n>0}-2z^{-n}w^n}.
\end{eqnarray*}

A similar calculation for $[H(z),E^+_+(w)]$,$[H(z),E^-_-(w)]$,$[H(z),E^+_-(w)]$ yields
\begin{eqnarray*}
\lefteqn{[H(z),X(w)]=[H(z),E^+_+(w)]E^+_-(w)\otimes A_\alpha(w)e^\alpha w^{-\frac{\alpha}{k}}}\\
&&\mspace{18mu}+E^+_+(w)[H(z),E^+_-(w)]\otimes A_\alpha(w)e^\alpha w^{-\frac{\alpha}{k}}+E^+_+(w)E^+_-(w)\otimes A_\alpha(w)[H(0),e^\alpha]w^{-\frac{\alpha}{k}}\\
&&=2E^+_+(w)E^+_-(w)\big(\textstyle\sum\limits_{n>0}z^{-n}w^n+\textstyle\sum\limits_{n<0}z^{-n}w^n+1\big)\otimes A_\alpha(w)e^\alpha w^{-\frac{\alpha}{k}}\\
&&=2E^+_+(w)E^+_-(w)\otimes A_\alpha(w)e^\alpha w^{-\frac{\alpha}{k}}=2X(w)\delta(\frac{w}{z}).
\end{eqnarray*}

It is immediate that $[H(z),Y(w)]=-2Y(w)\delta(\frac{w}{z})$. \qed
\end{proof}

The action of $c$ shows that the representation of ${A^{(1)}_1}$ just obtained has the level $-k$.

\subsection{The $Z$-algebra}
Furthermore we can get the representation of $Z$-algebra as in \cite{s6}. We remark that this $Z$-algebra is fundamentally different from the $Z$-algebra in \cite{s14}. Taken the same definition of formal power series $Z^{\pm}(z), x(\phi_1,z),x(\phi_2,z)$:
 $$Z^{+}(z)=Z(\alpha,z)=E^-_+(z)X(z)E^-_-(z), Z^{-}(z)=Z(-\alpha,z)=E^+_+(z)Y(z)E^+_-(z),
$$
and the generalized commutator brackets
\newcommand{\dlbrack}{[\hspace{-1.5pt}[}
\newcommand{\drbrack}{]\hspace{-1.5pt}]}
$$\dlbrack x(\phi_1,z),x(\phi_2,z)\drbrack=x(\phi_1,z)x(\phi_2,z)(1-\frac{w}{z})^{\frac{(\phi_1,\phi_2)}{c}}-x(\phi_2,z)x(\phi_1,z)(1-\frac{w}{z})^{\frac{(\phi_1,\phi_2)}{c}},$$for
$\phi_1,\phi_2=\pm\alpha$,we can check that the lemmas given in paper \cite{s6} still hold. We state them here in Lemma
\ref{lem:3.3} and Lemma \ref{lem:3.4}.
\\
\\
\begin{lemma}\label{lem:3.3}
Let $Z$-operators $Z(z)=Z^{+}(z),Z^{-}(z)$, we have that
\begin{equation}
[E^\pm_+(z),Z(w)]=0, \qquad[E^\pm_-(z),Z(w)]=0
\end{equation}
\end{lemma}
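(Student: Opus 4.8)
The plan is to exploit the tensor-factor structure of $V=S({\mathfrak h'}^-)\otimes\langle\Phi_{\alpha,0}(\omega,\overline\omega)\rangle\otimes\mathbb{C}(\mathbb{Z}\alpha)$ together with the inverse relations of Proposition \ref{prop:3.1}. The essential observation is that each exponential $E^{\pm}_{\pm}(z)$, being built solely from the Heisenberg generators $H(\mp n)$, acts only on the first factor $S({\mathfrak h'}^-)$ and as the identity on the other two, whereas in $Z^{\pm}(w)$ the pieces $A_{\pm\alpha}(w)$, $e^{\pm\alpha}$ and $w^{\mp\frac{\alpha}{k}}$ act only on the second and third factors. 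Consequently $[E^{\pm}_{\pm}(z),Z(w)]$ reduces to a statement about the first factor alone, and it suffices to show that $Z^{+}(w)$ and $Z^{-}(w)$ act as the identity on $S({\mathfrak h'}^-)$.

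First I would substitute the definition $X(w)\mapsto E^+_+(w)E^+_-(w)\otimes A_\alpha(w)e^\alpha w^{-\frac{\alpha}{k}}$ into $Z^{+}(w)=E^-_+(w)X(w)E^-_-(w)$ and commute the second/third-factor operators past the first-factor exponentials, obtaining
\[
Z^{+}(w)=\bigl(E^-_+(w)E^+_+(w)E^+_-(w)E^-_-(w)\bigr)\otimes A_\alpha(w)e^\alpha w^{-\frac{\alpha}{k}}.
\]
The key step is then the collapse of the Heisenberg dressing: using the first line of Eq. (\ref{eq:3.1}) one has $E^-_+(w)E^+_+(w)=1$ for the adjacent leading pair, leaving $E^+_-(w)E^-_-(w)=1$, so the first-factor part is precisely the identity. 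The identical reasoning applied to $Z^{-}(w)=E^+_+(w)Y(w)E^+_-(w)$, with $Y(w)\mapsto E^-_+(w)E^-_-(w)\otimes A^{\ast}_{-\alpha}(w)e^{-\alpha}w^{\frac{\alpha}{k}}$, yields the first-factor product $E^+_+(w)E^-_+(w)E^-_-(w)E^+_-(w)$, which again collapses to $1$ by Eq. (\ref{eq:3.1}). Thus both $Z$-operators are, on the nose, identity on $S({\mathfrak h'}^-)$.

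Finally I would conclude that since $Z^{\pm}(w)$ acts trivially on the factor $S({\mathfrak h'}^-)$ where $E^{\pm}_{\pm}(z)$ is supported, and $E^{\pm}_{\pm}(z)$ acts trivially on the factors where $Z^{\pm}(w)$ is supported, the two operators commute for \emph{all} formal arguments $z$ and $w$; hence $[E^{\pm}_+(z),Z(w)]=0$ and $[E^{\pm}_-(z),Z(w)]=0$ as claimed. The computation is essentially forced by the construction, so there is no serious obstacle; the only point requiring care is that the cancellations invoked are of genuinely adjacent exponentials, so that no reordering (and hence no extra scalar factor of the type appearing in Eq. (\ref{eq:3.3})) is introduced. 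Indeed, the whole content of the lemma is that the conjugating exponentials in the definition of $Z^{\pm}$ were chosen precisely to strip away the Heisenberg part of the currents $X(z),Y(z)$, which is the reason the $Z$-operators are insensitive to the Heisenberg subalgebra.
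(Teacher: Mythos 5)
Your proof is correct, but it takes a genuinely different route from the paper's. The paper never substitutes the explicit realization of $X(w)$, $Y(w)$ into $Z^{\pm}(w)$; instead it argues purely with commutation relations: for $n\neq 0$ it computes $[H(n),X(w)]=2X(w)w^{n}$, $[H(n),E^{-}_{+}(w)]=-2E^{-}_{+}(w)w^{n}$ (nonzero only for $n>0$), and $[H(n),E^{-}_{-}(w)]=-2E^{-}_{-}(w)w^{n}$ (nonzero only for $n<0$), then checks that these three contributions cancel in $[H(n),Z^{+}(w)]$ for every $n\neq0$; since the $E^{\pm}_{\pm}(z)$ are exponentials in the $H(n)$, the lemma follows. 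Your argument instead exploits the concrete tensor form of the realization from Theorem \ref{th:3.2}: the conjugating exponentials in $Z^{\pm}$ cancel, as adjacent inverse pairs (first line of Proposition \ref{prop:3.1}), against the Heisenberg dressing of $\pi(X(w))$ and $\pi(Y(w))$, giving $Z^{+}(w)=1\otimes A_{\alpha}(w)e^{\alpha}w^{-\alpha/k}$ and $Z^{-}(w)=1\otimes A^{\ast}_{-\alpha}(w)e^{-\alpha}w^{\alpha/k}$, which manifestly commute with anything supported on the factor $S({\mathfrak h'}^{-})$; your care about using only adjacent cancellations (so no $(1-z/w)^{\pm 2/k}$ factors arise) is exactly the right point to flag, and it is unproblematic here because the two exponentials in each cancelling pair share the same argument and the same subscript, hence commute. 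What each approach buys: yours is shorter, avoids the case analysis in $n$, and actually proves a stronger statement --- the identification of $Z^{\pm}$ that underlies the map $\pi_{\Omega}$ in Theorem \ref{th:3.5}, which it therefore anticipates; the paper's argument, by contrast, uses only the affine commutation relations together with the Heisenberg action, so it applies verbatim whenever $Z$-operators are defined by this dressing on an arbitrary module (the standard Lepowsky--Wilson-style argument), and is not tied to the particular realization of Theorem \ref{th:3.2}.
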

{\bf Proof. } For $n\neq0$, simple calculation yields
$$[H(n),X(w)]=\textstyle\sum\limits_{m\in\mathbb{Z}}[H(n),X(m)]w^{-m}=\sum\limits_{m\in\mathbb{Z}}2X(m+n)w^{-m}=2X(w)w^{n}$$
and write $x_1=\partial_{s}(e^{sx_1})|_{s=0}$, the following equations follow from $[x_1,e^{x_2}]=e^{x_2}[x_1,x_2]$
\begin{eqnarray*}
[H(n),E^{-}_-(w)]&=&\mspace{-8mu}E^{-}_-(w)[H(n),\textstyle\sum\limits_{m>0}\frac{H(m)}{km}w^{-m}]\\
&=&\mspace{-8mu}E^{-}_-(w)\textstyle\sum\limits_{m>0}\frac{2nc\delta_{m+n,0}}{km}w^{-m}=-2E^-_-(w)w^{n}\delta_{_{-m,n<0}},
\end{eqnarray*}
\begin{eqnarray*}
[H(n),E^{-}_+(w)]=-2E^-_+(w)w^{n}\delta_{_{m,n>0}}.
\end{eqnarray*}

Note that\begin{eqnarray*}
\lefteqn{[H(n),Z^+(w)]\;= [H(n),E^-_+(w)X(w)E^-_-(w)]}\\
&&=[H(n),E^-_+(w)]X(w)E^-_-(w)+E^-_+(w)[H(n),X(w)]E^-_-(w)+E^-_+(w)X(w)[H(n),E^-_-(w)]\\
&&=\big(-2w^{n}\delta_{_{m,n>0}}+2w^{n}+2w^{n}\delta_{_{-m,n<0}}\big)E^-_+(w)X(w)E^-_-(w)=0.
\end{eqnarray*}

Similiarly, $[H(n),Z^-(w)]=[H(-n),Z^\pm(w)]=0$, so
$$[E^+_-(z),Z^+(w)]=E^+_-(z)\big{[}\sum\limits_{n>0}\frac{H(n)}{kn}z^{-n},Z^+(w)\big{]}=0$$

The calculation for the other brackets is similar.
\\
\\
\begin{lemma}\label{lem:3.4} One has
\begin{equation}
\begin{array}{ll}
&\dlbrack Z^{\pm}(z),Z^{\pm}(w)\drbrack=0\\
&\dlbrack Z^{+}(z),Z^{-}(w)\drbrack=H(0)\delta(\frac{w}{z})-kw\partial_w\delta(\frac{w}{z})
\end{array}
\end{equation}
\end{lemma}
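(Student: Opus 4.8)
The plan is to strip the Heisenberg dressing off the $Z$-operators, reduce everything to bare parafermions, and then push the operator product expansions of Eq.~(\ref{eq:2.13}) through the generalized bracket; the fractional weight factors built into the bracket are exactly what is needed to restore the homogeneous $(z-w)$-factors of the parafermion OPE.

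First I would simplify $Z^{\pm}$. Substituting $\pi$ into $Z^{+}(z)=E^-_+(z)X(z)E^-_-(z)$ gives
\[
Z^{+}(z)=E^-_+(z)E^+_+(z)E^+_-(z)E^-_-(z)\otimes A_\alpha(z)e^\alpha z^{-\frac{\alpha}{k}},
\]
and by Proposition~\ref{prop:3.1} the first two and the last two factors each multiply to $1$, so that $Z^{+}(z)=1\otimes A_\alpha(z)e^\alpha z^{-\frac{\alpha}{k}}$ and likewise $Z^{-}(z)=1\otimes A^{\ast}_{-\alpha}(z)e^{-\alpha}z^{\frac{\alpha}{k}}$. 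Thus all $E$-contractions disappear (consistent with Lemma~\ref{lem:3.3}), and a product $Z^{\epsilon_1}(z)Z^{\epsilon_2}(w)$ factors as a parafermion product times a group-algebra element.

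Next I would process the group-algebra factor using $z^{\gamma}e^{\beta}=z^{\langle\gamma,\beta\rangle}e^{\beta}z^{\gamma}$, e.g.
\[
e^\alpha z^{-\frac{\alpha}{k}}e^\alpha w^{-\frac{\alpha}{k}}=z^{-\frac{2}{k}}e^{2\alpha}(zw)^{-\frac{\alpha}{k}},\qquad
e^\alpha z^{-\frac{\alpha}{k}}e^{-\alpha}w^{\frac{\alpha}{k}}=z^{\frac{2}{k}}z^{-\frac{\alpha}{k}}w^{\frac{\alpha}{k}}.
\]
The emergent scalar $z^{\mp 2/k}$ combines with the bracket weight $(1-\frac{w}{z})^{(\phi_1,\phi_2)/c}$ (with $c=-k$) to give precisely $(z-w)^{(\phi_1,\phi_2)/c}$; for instance $z^{-2/k}(1-\frac{w}{z})^{-2/k}=(z-w)^{-2/k}$ and $z^{2/k}(1-\frac{w}{z})^{2/k}=(z-w)^{2/k}$. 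With the homogeneous factor restored I then invoke Eq.~(\ref{eq:2.13}). In the $(\pm,\pm)$ case the contraction $\underbrace{A_\alpha(z)A_\alpha(w)}(z-w)^{-2/k}=0$ vanishes, and the surviving term $\boldsymbol{:}A_\alpha(z)A_\alpha(w)\boldsymbol{:}(z-w)^{-2/k}$ cancels against the reversed term $\boldsymbol{:}A_\alpha(w)A_\alpha(z)\boldsymbol{:}(w-z)^{-2/k}$ (identical group-algebra factor) by the symmetry of the normal-ordered product already used in Theorem~\ref{th:3.2}; hence $\dlbrack Z^{\pm}(z),Z^{\pm}(w)\drbrack=0$. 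In the $(+,-)$ case the normal-ordered parts cancel in the same way, but the contraction $\frac{-kzw}{(z-w)^2}$ now survives.

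The heart of the argument, and the step I expect to be most delicate, is the extraction of the delta-function terms. The leftover is $\big(\frac{-kzw}{(z-w)^2}-\frac{-kzw}{(w-z)^2}\big)z^{-\frac{\alpha}{k}}w^{\frac{\alpha}{k}}$, where the two rational factors are the expansions in $|z|>|w|$ and $|w|>|z|$; their difference is $-kw\,\partial_w\delta(\frac{w}{z})$. The subtlety is that $z^{-\alpha/k}w^{\alpha/k}$ multiplies a \emph{derivative} of the delta function and so cannot be naively evaluated at $z=w$. Differentiating the substitution law $f(w)\delta(\frac{w}{z})=f(z)\delta(\frac{w}{z})$ gives $f(w)\partial_w\delta(\frac{w}{z})=f(z)\partial_w\delta(\frac{w}{z})-f'(z)\delta(\frac{w}{z})$, which applied on each charge-$n$ component, where $z^{-\alpha/k}w^{\alpha/k}$ acts as the scalar $(w/z)^{2n/k}$, yields
\[
z^{-\frac{\alpha}{k}}w^{\frac{\alpha}{k}}\,\partial_w\delta(\tfrac{w}{z})=\partial_w\delta(\tfrac{w}{z})-\tfrac{H(0)}{k}\,z^{-1}\delta(\tfrac{w}{z}),
\]
with $H(0)$ the grading operator on $\mathbb{C}(\mathbb{Z}\alpha)$ (eigenvalue $2n$ on $e^{n\alpha}$). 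Since $wz^{-1}\delta(\frac{w}{z})=\delta(\frac{w}{z})$, multiplying through by $-kw$ produces $H(0)\delta(\frac{w}{z})-kw\partial_w\delta(\frac{w}{z})$, as claimed. The main obstacles are therefore the careful bookkeeping of the multivalued fractional powers (needed to justify both the normal-ordered cancellation and the branch combination) and the correct identification of the $H(0)$ arising here as the group-algebra grading operator, rather than the affine $H(0)$ which annihilates $S({\mathfrak h'}^-)$.
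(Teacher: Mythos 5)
Your argument is correct (at the paper's own level of rigor concerning the multivalued fractional powers), but it takes a genuinely different route from the paper's proof of Lemma \ref{lem:3.4}. The paper never unpacks the parafermions at this point: using Lemma \ref{lem:3.3} and Proposition \ref{prop:3.1} it pushes the $E$-factors through the products, so that the generalized brackets reduce to the dressed affine commutators $[Y(z),Y(w)]=0$ and $[X(z),Y(w)]=H(w)\delta(\frac{w}{z})-kw\partial_w\delta(\frac{w}{z})$ already established in Theorem \ref{th:3.2}; the unwanted modes $H(m)$, $m\neq 0$, inside $H(w)\delta(\frac{w}{z})$ are then absorbed by the $w$-derivative of the residual $E$-dressing (the fourth identity of Proposition \ref{prop:3.1}), leaving exactly $H(0)\delta(\frac{w}{z})-kw\partial_w\delta(\frac{w}{z})$. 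Since only the level $-k$ affine relations enter, that argument applies to the $Z$-operators attached to any level $-k$ module. Your proof instead exploits the specific realization: the observation $E^-_+(z)E^+_+(z)=E^+_-(z)E^-_-(z)=1$ collapses the dressing, giving $Z^+(z)=1\otimes A_\alpha(z)e^\alpha z^{-\alpha/k}$ and $Z^-(z)=1\otimes A^{\ast}_{-\alpha}(z)e^{-\alpha}z^{\alpha/k}$, after which everything follows from the contractions (\ref{eq:2.13}) together with the group-algebra bookkeeping. This is precisely the computation the paper postpones to the proof of Theorem \ref{th:3.5}, so your route merges Lemma \ref{lem:3.4} and Theorem \ref{th:3.5} into a single verification --- more economical, but tied to this particular model, whereas the paper's version keeps the lemma independent of the realization. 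Two details in your write-up are in fact sharper than the paper's own text: the componentwise identity $z^{-\alpha/k}w^{\alpha/k}\partial_w\delta(\frac{w}{z})=\partial_w\delta(\frac{w}{z})-\frac{H(0)}{k}z^{-1}\delta(\frac{w}{z})$ is the clean way to extract the $H(0)$-term (the corresponding lines in the paper's proof of Theorem \ref{th:3.5} contain sign and exponent slips), and your insistence that this $H(0)$ is the grading operator on $\mathbb{C}(\mathbb{Z}\alpha)$, not the Heisenberg zero mode which annihilates $S({\mathfrak h'}^-)$, makes explicit an identification the paper leaves implicit in its formula $H(z)\mapsto H(z)\otimes 1$.
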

\begin{proof} We calculate the product using Proposition \ref{prop:3.1} together with Lemma \ref{lem:3.3}
\begin{eqnarray*}
Z^-(z)Z^-(w)&=&\mspace{-8mu}Z^-(z)E^+_+(w)Y(w)E^+_-(w)E^+_+(w)Z^{-}(z)Y(w)E^+_-(w)\\
&=&\mspace{-8mu}E^+_+(w)E^+_+(z)Y(z)\big{(}E^-_+(w)E^+_+(w)\big{)}E^+_-(z)Y(w)E^+_-(w)\\
&=&\mspace{-8mu}E^+_+(w)E^+_+(z)Y(z)E^-_+(w)E^+_-(z)E^+_+(w)(1-\frac{w}{z})^{\frac{2}{k}}Y(w)E^+_-(w)\\
&=&\mspace{-8mu}E^+_+(w)E^+_+(z)Y(z)E^-_+(w)\big{(}E^+_-(z)Z^-(w)\big{)}(1-\frac{w}{z})^{\frac{2}{k}}\\
&=&\mspace{-8mu}E^+_+(w)E^+_+(z)Y(z)E^-_+(w)E^+_+(w)Y(w)E^+_-(w)E^+_-(z)(1-\frac{w}{z})^{\frac{2}{k}}\\
&=&\mspace{-8mu}E^+_+(w)E^+_+(z)Y(z)Y(w)E^+_-(w)E^+_-(z)(1-\frac{w}{z})^{\frac{2}{k}}
\end{eqnarray*}

Consequently,
\begin{eqnarray*}
\dlbrack Z^-(z)Z^-(w)\drbrack&=&\mspace{-8mu}Z^-(z)Z^-(w)(1-\frac{w}{z})^{-\frac{2}{k}}-Z^-(w)Z^-(z)(1-\frac{z}{w})^{-\frac{2}{k}}\\
&=&\mspace{-8mu}E^+_+(z)E^+_+(w)[Y(z),Y(w)]E^+_-(z)E^+_-(w)=0
\end{eqnarray*}

The calculation for $\dlbrack Z^+(z),Z^+(w)\drbrack $ is similar.

Also
$$
\begin{array}{ll}
Z^+(z)Z^-(w)&=E^+_+(w)E^-_+(z)X(z)Y(w)E^+_-(w)E^-_-(z)(1-\frac{w}{z})^{-\frac{2}{k}}\\
Z^-(w)Z^+(z)&=E^-_+(z)E^+_+(w)Y(w)X(z)E^-_-(z)E^+_-(w)(1-\frac{z}{w})^{-\frac{2}{k}}\\
\end{array}
$$
\begin{eqnarray*}
\dlbrack \lefteqn{Z^+(z)Z^-(w)\drbrack\, =Z^+(z)Z^-(w)(1-\frac{w}{z})^{\frac{2}{k}}-Z^-(w)Z^+(z)(1-\frac{z}{w})^{\frac{2}{k}}}\\
&&\mspace{-20mu}=E^+_+(w)E^-_+(z)[X(z)Y(w)]E^+_-(w)E^-_-(z)\\
&&\mspace{-20mu}=E^-_+(z)E^+_+(w)H(w)\delta(\frac{w}{z})E^-_-(z)E^+_-(w)-E^-_+(z)E^+_+(w)kw\partial_w\delta(\frac{w}{z})E^-_-(z)E^+_-(w)\\
&&\mspace{-20mu}=H(w)\delta(\frac{w}{z})-kw\left(\partial_w\big{(}E^-_+(z)E^+_+(w)E^-_-(z)E^+_-(w)
\delta(\frac{w}{z})\big{)}\right.\\
&&\qquad\qquad\qquad\qquad-\left.\partial_w\big(E^-_+(z)E^+_+(w)E^-_-(z)E^+_-(w)\big)\delta(\frac{w}{z})\right)\\
&&\mspace{-20mu}=\textstyle\sum\limits_{m\in\mathbb
Z}{H(m)}w^{-m}\delta(\frac{w}{z})-kw\partial_w\delta(\frac{w}{z})+kw\textstyle\sum\limits_{m\neq0}\frac{H(m)}{k}w^{-m-1}\delta(\frac{w}{z})\\
&&\mspace{-20mu}=H(0)\delta(\frac{w}{z})-kw\partial_w\delta(\frac{w}{z})
\end{eqnarray*}
\qed
\end{proof}

For $A^{(1)}_1$-module $V=S({\mathfrak h'}^-)\otimes \langle\Phi_{\alpha,0}(\omega,\overline{\omega})\rangle\otimes \mathbb{C}(\mathbb{Z}\alpha)$ in Theorem \ref{th:3.2}, we define the vacuum space $\Omega(V)$ of $V$
by
$$\Omega (V)=\{v\in V,\eta{'}^{+}=\oplus_{n>0}H(n)\mid {\eta{'}}^{+}\cdot v =0\}.
$$
Observe that we can decompose  $V$ by $V=S({\mathfrak h'}^-)\otimes\Omega(V)$, then we get $\Omega(V)=\Phi_{\alpha,0}(\omega,\overline{\omega})\otimes \mathbb{C}(\mathbb{Z}\alpha)$ and furthermore
\begin{theorem}\label{th:3.5} The map
$\pi_{\Omega}: Z\rightarrow gl(\Omega(V))$
gives a representation of
$Z$-algebra on the vacuum space $\Omega (V)$ at level $-k$ via the action:
$$
\begin{array}{ll}
&Z^+(z)\longmapsto A_\alpha(z)e^\alpha z^{-\frac{\alpha}{k}}\\
&Z^-(z)\longmapsto A_{-\alpha}^{\ast}(z)e^{-\alpha} z^{\frac{\alpha}{k}}\\.
\end{array}$$
\end{theorem}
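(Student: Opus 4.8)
The plan is to deduce the statement from the structural results already in hand, namely Proposition~\ref{prop:3.1}, Lemma~\ref{lem:3.3}, and Lemma~\ref{lem:3.4}, together with the tensor decomposition $V=S({\mathfrak h'}^-)\otimes\Omega(V)$. The key observation is that in $Z^{\pm}(z)$ the Heisenberg exponentials cancel completely. Substituting the definitions of $X(z)$ and $Y(z)$ from Theorem~\ref{th:3.2} and moving the factors $A_{\pm\alpha}(z)$, $e^{\pm\alpha}$, $z^{\mp\alpha/k}$ (which commute with every $E^{\pm}_{\pm}$) past the exponentials, one is left with
$$
Z^+(z)=E^-_+(z)E^+_+(z)E^+_-(z)E^-_-(z)\otimes A_\alpha(z)e^\alpha z^{-\frac{\alpha}{k}},
$$
and the identities $E^-_+(z)E^+_+(z)=E^+_-(z)E^-_-(z)=1$ of Proposition~\ref{prop:3.1} collapse the $S({\mathfrak h'}^-)$-factor to the identity. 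The same computation applies to $Z^-(z)$, so that, as operators on the whole of $V$,
$$
Z^+(z)=1\otimes A_\alpha(z)e^\alpha z^{-\frac{\alpha}{k}},\qquad Z^-(z)=1\otimes A_{-\alpha}^{\ast}(z)e^{-\alpha}z^{\frac{\alpha}{k}}.
$$

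With this in place, I would first check that $\pi_\Omega$ is well defined, i.e.\ that $Z^{\pm}(z)$ preserve $\Omega(V)$. This is immediate from the display above, since the operators act trivially on the $S({\mathfrak h'}^-)$-factor; it may also be read off from Lemma~\ref{lem:3.3}, as $[H(n),Z^{\pm}(w)]=0$ gives $H(n)Z^{\pm}(w)v=Z^{\pm}(w)H(n)v=0$ for $n>0$ and $v\in\Omega(V)$. Restricting to $\Omega(V)=\langle\Phi_{\alpha,0}(\omega,\overline{\omega})\rangle\otimes\mathbb{C}(\mathbb{Z}\alpha)$ then reproduces precisely the action in the statement.

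It remains to verify the defining relations of the $Z$-algebra on $\Omega(V)$. These are exactly the generalized commutation relations of Lemma~\ref{lem:3.4},
$$
\dlbrack Z^{\pm}(z),Z^{\pm}(w)\drbrack=0,\qquad \dlbrack Z^{+}(z),Z^{-}(w)\drbrack=H(0)\delta(\tfrac{w}{z})-kw\partial_w\delta(\tfrac{w}{z}),
$$
which hold as operator identities on all of $V$. Since every operator occurring here acts as $1\otimes(\cdot)$ relative to $V=S({\mathfrak h'}^-)\otimes\Omega(V)$ — the $Z^{\pm}(z)$ by the first step, the central term through $c=-k$, and $H(0)$ through the charge lattice $\mathbb{C}(\mathbb{Z}\alpha)$ — these identities restrict verbatim to $\Omega(V)$. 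As $c$ continues to act by the scalar $-k$ after restriction, $\pi_\Omega$ is a level $-k$ representation of the $Z$-algebra, as claimed.

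The step that I expect to demand the most care is the descent of the right-hand side of the mixed relation $\dlbrack Z^{+}(z),Z^{-}(w)\drbrack$ to $\Omega(V)$, which turns on the correct reading of $H(0)$ there. On $S({\mathfrak h'}^-)$ the zero mode acts by $0$, so the nontrivial $H(0)$-contribution in Lemma~\ref{lem:3.4} must be the one induced on $\mathbb{C}(\mathbb{Z}\alpha)$ through $[H(0),e^{\pm\alpha}]=\pm\langle H,\alpha\rangle e^{\pm\alpha}$, exactly as in the $H(0)\delta(\tfrac{w}{z})$ term produced in the proof of Theorem~\ref{th:3.2}. Tracing this identification through is what guarantees that both sides live on $\Omega(V)$ and is the only genuinely delicate point.
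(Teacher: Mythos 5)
Your proposal is correct, but it reaches the theorem by a genuinely different route than the paper. The paper's own proof never invokes Lemma \ref{lem:3.4}: it verifies the generalized commutation relations from scratch for the operators $A_\alpha(z)e^\alpha z^{-\frac{\alpha}{k}}$ and $A_{-\alpha}^{\ast}(z)e^{-\alpha}z^{\frac{\alpha}{k}}$ acting on $\Omega(V)$, by commuting the group-algebra factors via $z^{\alpha}e^{\beta}=z^{\langle\alpha,\beta\rangle}e^{\beta}z^{\alpha}$, splitting each product into a normal-ordered part plus a contraction via Eq. (\ref{eq:2.13}), noting that the normal-ordered parts cancel in the symmetrized combinations, and converting $\frac{-kzw}{(z-w)^2}-\frac{-kzw}{(w-z)^2}$ into $-kw\partial_w\delta(\frac{w}{z})$, with the $H(0)$-term arising exactly as you predicted from differentiating $z^{-\frac{\alpha}{k}}w^{\frac{\alpha}{k}}$ against the delta function. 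Your route instead rests on the collapse identity $Z^+(z)=E^-_+(z)E^+_+(z)E^+_-(z)E^-_-(z)\otimes A_\alpha(z)e^\alpha z^{-\frac{\alpha}{k}}=1\otimes A_\alpha(z)e^\alpha z^{-\frac{\alpha}{k}}$ (and its analogue for $Z^-$), which is valid: the four exponentials pair off to the identity by Proposition \ref{prop:3.1}, no reordering being needed since they already stand adjacent in the correct pairs, and the remaining factors act in the other tensor slots; granting this, Lemma \ref{lem:3.4} restricts verbatim to $\Omega(V)$ and yields the theorem, with no circularity since Lemma \ref{lem:3.4} was proved from Theorem \ref{th:3.2} rather than from Theorem \ref{th:3.5}. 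What the paper's computation buys is a self-contained check, independent of the $Z$-operator dressing, that exhibits how the parafermion contractions alone produce the central term and the zero-mode term; what your argument buys is brevity and a conceptual point the paper leaves implicit, namely that the $Z$-operators on $V$ are literally the vacuum-space operators tensored with the identity, so that Lemma \ref{lem:3.4} and Theorem \ref{th:3.5} are two readings of one operator identity. Your final caution about $H(0)$ is also resolved correctly: in this realization $\pi(H(0))$ acts through the group-algebra factor $\mathbb{C}(\mathbb{Z}\alpha)$ (visible in the proof of Theorem \ref{th:3.2}, where the term $[H(0),e^\alpha]$ supplies it), hence it is an operator on $\Omega(V)$ and the right-hand side of the mixed relation makes sense after restriction.
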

\begin{proof} Under the map $\pi$ we have
\begin{eqnarray*}
Z^+(z)Z^+(w)&\mapsto&\mspace{-8mu}A_\alpha(z)e^\alpha z^{-\frac{\alpha}{k}}A_\alpha(w)e^\alpha w^{-\frac{\alpha}{k}}\\
&=&\mspace{-8mu}A_\alpha(z)A_\alpha(w)e^{2\alpha}z^{-\frac{2}{k}}{(zw)}^{-\frac{\alpha}{k}}
\end{eqnarray*}

Therefore,
\begin{eqnarray*}
\dlbrack Z^+(z),Z^+(w)\drbrack&=&\mspace{-8mu}Z^+(z)Z^+(w)(1-\frac{w}{z})^{-\frac{2}{k}}-Z^+(w)Z^+(z)(1-\frac{z}{w})^{-\frac{2}{k}}\\
&\mapsto&\mspace{-8mu}\big(A_{\alpha}(z)A_\alpha(w)(z-w)^{-\frac{2}{k}}-A_{\alpha}(w)A_\alpha(z)(w-z)^{-\frac{2}{k}}\big)e^{2\alpha}{(zw)}^{-\frac{\alpha}{k}}\\
&=&\mspace{-8mu}\big(\boldsymbol{:}A_{\alpha}(z)A_\alpha(w)\boldsymbol{:}(z-w)^{-\frac{2}{k}}-\boldsymbol{:}A_{\alpha}(w)A_\alpha(z)\boldsymbol{:}(w-z)^{-\frac{2}{k}}\big)e^{2\alpha}{(zw)}^{-\frac{\alpha}{k}}\\
&&\mspace{-8mu}+\big(\underbrace{A_{\alpha}(z)A_\alpha(w)}(z-w)^{-\frac{2}{k}}-\underbrace{A_{\alpha}(w)A_\alpha(z)}(w-z)^{-\frac{2}{k}}\big)e^{2\alpha}{(zw)}^{-\frac{\alpha}{k}}=0
\end{eqnarray*}

Similar calculations produce$\dlbrack Z^-(z),Z^-(w)=0\drbrack$. Note that
\begin{eqnarray*}
Z^+(z)Z^-(w)&\mapsto&\mspace{-8mu}A_\alpha(z)e^\alpha z^{-\frac{\alpha}{k}}A_{-\alpha}^{\ast}(w)e^{-\alpha}w^{\frac{\alpha}{k}}\\
&=&\mspace{-8mu}A_\alpha(z)A_{-\alpha}^{\ast}(w)z^{\frac{2}{k}}z^{-\frac{\alpha}{k}}w^{\frac{\alpha}{k}}
\end{eqnarray*}
\begin{eqnarray*}
Z^-(w)Z^+(z)&\mapsto&\mspace{-8mu}A_{-\alpha}^{\ast}(w)A_{\alpha}(z)w^{\frac{2}{k}}z^{-\frac{\alpha}{k}}w^{\frac{\alpha}{k}}
\end{eqnarray*}
Then
\begin{eqnarray*}
\dlbrack Z^+(z),Z^-(w)\drbrack&=&\mspace{-8mu}Z^+(z)Z^-(w)(1-\frac{w}{z})^{\frac{2}{k}}-Z^-(w)Z^+(z)(1-\frac{z}{w})^{\frac{2}{k}}\\
&\mapsto&\mspace{-8mu}\big(A_{\alpha}(z)A_{-\alpha}^{\ast}(w)(z-w)^{\frac{2}{k}}-A_{-\alpha}^{\ast}(w)A_\alpha(z)(w-z)^{\frac{2}{k}}\big) z^{-\frac{\alpha}{k}}w^{\frac{\alpha}{k}}\\
&=&\mspace{-8mu}\big(\boldsymbol{:}A_{\alpha}(z)A_{-\alpha}^{\ast}(w)\boldsymbol{:}(z-w)^{\frac{2}{k}}-\boldsymbol{:}A_{-\alpha}^{\ast}(w)A_\alpha(z)\boldsymbol{:}(w-z)^{\frac{2}{k}}\big) z^{-\frac{\alpha}{k}}w^{\frac{\alpha}{k}}\\
&&\mspace{-8mu}+\big(\underbrace{A_{\alpha}(z)A_{-\alpha}^{\ast}}(z-w)^{\frac{2}{k}}-\underbrace{A_{-\alpha}^{\ast}(w)A_\alpha(z)}(w-z)^{\frac{2}{k}}\big)z^{-\frac{\alpha}{k}}w^{\frac{\alpha}{k}}\\
&=&\mspace{-8mu}\big(\frac{-kzw}{(z-w)^2}-\frac{-kzw}{(w-z)^2}\big)z^{-\frac{\alpha}{k}}w^{\frac{\alpha}{k}}=-kw\partial_w\delta(\frac{w}{z})z^{-\frac{\alpha}{k}}w^{\frac{\alpha}{k}}\\
&=&\mspace{-8mu}-kw\partial_w{\big(\delta(\frac{w}{z})z^{-\frac{\alpha}{k}}w^{\frac{\alpha}{k}}\big)}+kw\delta(\frac{w}{z})\partial_w(z^{\frac{\alpha}{k}}w^{-\frac{\alpha}{k}})\\
&=&\mspace{-8mu}-kw\partial_w{\delta(\frac{w}{z})}+az^{\frac{\alpha}{k}}w^{-\frac{\alpha}{k}}
\delta(\frac{w}{z})=H(0)\delta(\frac{w}{z})-kw\partial_w\delta(\frac{w}{z}),
\end{eqnarray*}
from which the theorem follows. \qed
\end{proof}


\begin{thebibliography}{99}
\bibitem{s3} Adamovi$\acute{c}$, A. : Lie superalgebras and irreducibility of ${A^{(1)}_1}$-modules at the critical level. Commun. Math. Phys. \textbf{270},  141--161 (2000)
\bibitem{s9} Ding, X.~M., Fan, H, Shi, K.~J.: $W_3$-algebra constructed from the $SU(3)$ parafermion. Nucl. Phys.  \textbf{B422}, 307--328 (1994)
\bibitem{s10} Ding, X.~M., Gould, M.~D., Zhang, Y.~Z.: Twisted parafermions. Phys. Lett. \textbf{B530}, 197--201 (2002)
\bibitem{s11} Ding, X.~M., Gould, M.~D., Zhang, Y.~Z.: ${A^{(2)}_2}$ parafermions: A new conformal field. Nucl. Phys.  \textbf{B636}, 549--567 (2002)
\bibitem{s7} Dong, C.~Y., Lepowsky, J.: Generalized vertex algebras and relative vertex operators. Progress in Math, \textbf{112}. Birkh\"auser, Boston (1993)
\bibitem{s8} Dong, C.~Y., Wang, Q.: Parafermion Vertex operator algebras. Front. Math. China, \textbf{6(4)}, 567--579 (2011)
\bibitem{s6} Dunbar, J., Jing, N., Misra, K.: Realization of $\widehat{sl}_2(\mathbb{C})$ at the critical level and the $Z$-algebra, Commun. Contemp. Math. 16, No. 2, (2014), 145006 (13 pages).
\bibitem{FeF} Feigin, B.~L., Frenkel, E.~V.: Representations of affine Kac-Moody algebras and Bosonization. In:
    Brink, L., Freidan, D. Polyakov, A. M. (eds.) Physics and Mathematics of Strings, pp. 271--316,
    World Scientific, Singapore (1990)
\bibitem{FF} Feingold, A. Frenkel, I.~B.: Classical affine Lie algebras. Adv. Math. \textbf{56}, 117--172 (1985)
\bibitem{F} Frenkel, I.~B.: Spinor representations of affine Lie algebras. Proc. Nat. Acad. Sci. U.S.A. \textbf{77} , 6303--6306 (1980)
\bibitem{s15} Frenkel, I.~B., Lepowsky, J., Meurman, A.: Vertex operator algebras and the monster. Pure and Appl. Math., \textbf{134}, Boston: Academic Press (1988)
\bibitem{FK} Frenkel, I.~B., Kac, V.~G.: Basic representations of affine Lie algebras and dual resonance models. Invent. Math. \textbf{62}, 23--66 (1980)
\bibitem{s13} Gepner, D.: New conformal filed theories associated with Lie algebras and their partions functions. Nucl. Phys., \textbf{B290}, 10--24 (1987)
\bibitem{s5} Jing, N.: Higher Level Representations of the Quantum Affine Algebra $U_q(\widehat{sl}_2(\mathbb{C}))$. J. of Alg. \textbf{182}, 448--468 (1996)
\bibitem{KKLW} Kac, V.~G., Kazhdan, D., Lepowsky, J., Wilson, R.~L.: Realization of the basic representation of the Euclidean Lie algebras. Adv. Math. \textbf{42}, 83--112 (1981)
\bibitem{KP} Kac, V. G., Peterson, D.: Spin and wedge representations of infinite-dimensional Lie algebras and groups.
     Proc. Nat'l. Acad. Sci. USA \textbf{78}, 3308--3312 (1981)
\bibitem{s4}ten Kroode, F., van de Leur, J.: Level one representations of the affine algebra $B^{(1)}_n$. Acta Appl. Math. \textbf{31}, 1--73 (1993)
\bibitem{s1} Lepowsky, J., Wilson, R.~L.: Construction of the Affine Lie Algebra ${A^{(1)}_1}$. Commun. Math. Phys.
      \textbf{62}, 43--53 (1978)
\bibitem{S} Segal, G.: Unitary representations of some infinite dimensioanl groups. Commun. Math. Phys. \textbf{80}, 301--342 (1981)
\bibitem{s2} Wakimoto, M.: Fock representations of the affine Lie algebra ${A^{(1)}_1}$. Commun. Math. Phys. \textbf{104}, 605--609 (1986)
\bibitem{s14} Wang, P., Zhu, C.~Y.: $Z$-algebra construction of $W$-algebra in the parafermion model. Commun. Theor. Phys., \textbf{ 21}, 471--478 (1994)
\bibitem{s12} Zamolodchikov, A.~B., Fateev, V.~A. : Nonlocal (parafermion) currents in two-dimensional conformal quantum  field theory and self-dual critical points in $Z_N$-symmetric statistical systems. Sovet Phys., JETP \textbf{62}, 215--225 (1985)



\end{thebibliography}
\end{document}